\documentclass [a4paper,12pt]{article}
\usepackage{amsmath}
\usepackage{amsfonts}
\usepackage{indentfirst}
\usepackage{esint}
\usepackage{latexsym}
\usepackage{bbm}
\usepackage{amsfonts}
\usepackage{mathrsfs}
\usepackage{amssymb}
\usepackage{amsmath}
\usepackage{amsthm}
\usepackage{latexsym}
\usepackage{indentfirst}
\usepackage{enumitem}
\usepackage{bm}
\usepackage{esint}
\usepackage{hyperref}
\usepackage{cleveref}
\usepackage[numbers,square]{natbib}
\numberwithin{equation}{section}
\usepackage{color}
\allowdisplaybreaks
\usepackage{amsthm}
\usepackage{amssymb}
\usepackage{enumerate}
\usepackage{ulem}
\allowdisplaybreaks

\newtheorem{theorem}{Theorem}[section]
\newtheorem{lemma}[theorem]{Lemma}

\newtheorem{defn}[theorem]{Definition}
\newtheorem{remark}[theorem]{Remark}

\makeatletter
\usepackage{amsmath}
\usepackage{amsfonts}
\usepackage{indentfirst}
\usepackage{esint}
\usepackage{latexsym}
\usepackage{authblk}
\usepackage{color}
\UseRawInputEncoding
\usepackage{amsthm}
\usepackage{amssymb}
\usepackage{enumerate}
\usepackage{ulem}
\makeatletter

\newcommand{\Rmnum}[1]{\expandafter\@slowromancap\romannumeral #1@}
\makeatother
\begin{document}

\title{The lifespan of positive solutions of heat equation with power-logarithmic nonlinearity on locally finite graph}
\author[a,b]{Pengxiu Yu\thanks{Email: Pxyu@ruc.edu.cn}}
\author[c,d]{Yiping Zhang\thanks{Corresponding author, Email: zhangyiping161@mails.ucas.ac.cn}}
\affil[a]{\footnotesize{School of Mathematics,
Renmin University of China, Beijing 100872, China}}
\affil[b]{\footnotesize{Fakult\"{a}t f\"{u}r Mathematik, Universit\"{a}t Bielefeld, Bielefeld 100131, Germany
}}
\affil[c]{\footnotesize{School of Mathematics and Statistics, and Hubei Key Laboratory of Mathematical Sciences, Central China Normal University, Wuhan 430079, China}}
\affil[d]{\footnotesize{Key Laboratory of Nonlinear Analysis \& Applications (Ministry of Education), Central China Normal University, Wuhan 430079, China}}
\date{}
\maketitle
\begin{abstract}
On a locally finite connected graph $G=(V,E)$,
using the first eigenvalue method introduced by Kaplan \cite{MR160044} and the discrete
Phragm\'{e}n-Lindel\"{o}f principle developed by Hu-Wang \cite{cvhuyuanyang}, we first establish the asymptotic behaviour of the lifespan of positive solutions to a semilinear heat equation
with the power-logarithmic nonlinearity $u^p|\log u|^q$, provided that the initial datum is bounded below by a positive constant. These results extend those of Hu-Wang \cite{cvhuyuanyang} to equations with a power-logarithmic source term.  Moreover, by means of a more direct argument, we show that analogous lifespan estimates remain valid for nonnegative initial datum $u(x,0)$, provided that $u(x_i,0)$ is suitably large at some vertex $x_i\in V$.



\end{abstract}

\section{Introduction and main results}\label{s1}
\noindent
Let $G=G(V,E,w,\mu)$ be a locally finite connected weighted graph and $\Delta$ be the usual graph Laplacian defined by
\begin{equation*}
\Delta u(x)=\frac{1}{\mu(x)}
\sum_{x,y\in V, y\sim x}w_{xy}(u(y)-u(x)).
\end{equation*}
Please refer to the text preceding Theorem \ref{thm1} for the precise meanings of the symbols appearing in the definition of $\Delta$.

\noindent
Recently, Hu-Wang \cite{cvhuyuanyang} investigated
the following Cauchy problem
\begin{equation}\label{cvequ}
\left\{
\begin{array}{lll}
u_t=\Delta u +u^p, \ \text{ in }\ V\times(0,T),\\[10pt]
u(x,0)=\lambda\psi(x),\ \ \text{for } x\in V,
\end{array}
\right.
\end{equation}
where the parameters $p>1$, $\lambda>0$, $T>0$
and
\begin{equation*}
\psi: V \to [0, \infty) \text{ is a bounded function, but not identically zero.}
\end{equation*}
Under some conditions, 
the lifespan $T_\lambda$ (the maximal existence time) of the positive solution $u$ of \eqref{cvequ} is investigated and satisfies the estimates
\begin{equation}\label{cvlim}
\lim_{\lambda\to\infty}
\lambda^{p-1}T_\lambda
=\frac{1}{(p-1)\|\psi\|^{p-1}_{l^{\infty}(V)}}.
\end{equation}

It is then natural to ask what happens when
$p=1$ in the estimates \eqref{cvlim}. By classical parabolic theory, for finite-energy initial datum, the solution to \eqref{cvequ} exists globally in time and can not blow up in finite time. Any unbounded growth, if it occurs, can only occur as $t\rightarrow\infty$. Therefore, to investigate the borderline regime between linear growth and power-type nonlinearities, we consider the logarithmically superlinear nonlinearity $u(\log u)^q$ $(q>1)$
in the equation. More exactly, we consider the following equation
\begin{equation}\label{equ}
\left\{
\begin{array}{lll}
u_t-\Delta u =u^p\left|\log u\right|^q,\ \ \text{in}\ V\times(0,T),\\[8pt]
u(x,0)=\lambda\psi(x),\ \ \text{for } x\in V,
\end{array}
\right.
\end{equation}
where
$p=1$ and $q>1$ or $p>1$ and $q\geq 1$.
Here, $\lambda>0$, $T>0$, and $\psi$ are as in \eqref{cvequ}.
It is easy to see that, when $p=q=1$, the spatially homogeneous
function
\[
u(x,t)=e^{e^t}
\]
solves the equation with the initial datum $u(x,0)=e$. This solution
grows double-exponentially in time and exhibits infinite-time
grow-up. Therefore, we only consider the cases $p=1$ with $q>1$, and $p>1$ with $q\geq 1$.

There are many literature concerning the properties of solutions to a semilinear parabolic equation. Namely, for some $T>0$ and a nontrivial, nonnegative, bounded and continuous function $\psi$, consider the following Cauchy problem
\begin{equation}\label{Cauchy}
\left\{
\begin{aligned}
&u_t=\Delta u+u^p
\quad\text{in}\quad\mathbb{R}^N\times(0,T),\\[8pt]
&u(x,0)=\psi(x)\quad\text{for} \quad x\in\mathbb{R}^N.
\end{aligned}
\right.
\end{equation}

\noindent
The maximal existence time of the solution to \eqref{Cauchy} is defined by
$$T[\psi]:=\sup\bigl\{
T>0:\ \eqref{Cauchy} \text{ admits a unique non-negative classical solution
in } \mathbb{R}^N\times(0,T)\bigr\}.$$

\noindent
There is a large body of literature concerning the existence and
nonexistence of global solutions to problem \eqref{Cauchy}. The first behavior of
solutions is characterized by the celebrated Fujita phenomenon. By using the heat-kernel representation, the comparison principle, and nonlinear integral estimates,
Fujita \cite{MR214914} proved that, if
$p>1+\frac{2}{N},$
then problem \eqref{Cauchy} admits a global classical solution for
sufficiently small initial datum, whereas every nontrivial nonnegative
solution blows up in finite time if $1<p<1+\frac{2}{N}$.

For the critical cases $N=1$ with $p=3$ and $N=2$ with $p=2$, based on refined lower-bound estimates for the Duhamel formula and an iterative procedure produces logarithmic growth, Hayakawa \cite{MR338569} established the nonexistence of nontrivial global solutions. Moreover, Kobayashi-Sirao-Tanaka \cite{MR450783} generalized and systematized Hayakawa’s iterative lower-bound method to arbitrary dimensions and more general nonlinearities that includes the critical power $p=1+\frac{2}{N}$. 

Now, we turn to the estimates of the lifespan of solutions to equation \eqref{Cauchy}.
Under the assumption
$\liminf_{|x|\to+\infty}\phi(x)>0$,
by combining comparison arguments and heat-kernel estimates to reduce the problem to suitable ordinary differential inequalities,
Lee and Ni \cite{MR1057781} proved that $T[\lambda\phi]<\infty$
for every $\lambda>0$ and the asymptotic behaviour 
$C_1\lambda^{1-p}\leq T[\lambda\phi]\leq C_2\lambda^{1-p}$, for small $\lambda>0$.

Later, based on a parabolic rescaling, comparison with the ordinary differential equation $V'=V^p$ and Kaplan’s first-eigenfunction method on large balls, Gui and Wang \cite{MR1308611}  refined the Lee-Ni estimates \cite{MR1057781} by determining the exact leading-order constants, provided that $\phi(x)\rightarrow\phi_\infty>0$ as $|x|\rightarrow\infty$. A comprehensive account of semilinear parabolic equations posed on $\mathbb{R}^N$ can be found, for example,
in \cite{MR1651764,MR1742850,MR1056055} and the references therein.

%

Notably, equation \eqref{Cauchy} has also been extensively studied on manifolds, such as the $N$-dimensional hyperbolic space $\mathbb{H}^N$. We refer readers interested in this topic to \cite{MR2823663,MR3569152} and the references therein for further details.

While semilinear equations involving the pure-power nonlinearity $u^p$
have been investigated extensively, a natural question is how the introduction of a logarithmic perturbation in the reaction term affects the qualitative properties of their solutions. In this direction, $a\in\mathbb{R}$ and $p$  
belonging to the Sobolev subcritical regime, Hamza and Zaag \cite{MR4393386} considered the semilinear heat equation
\begin{equation}\label{arch}
\partial_tu-\Delta u
=|u|^{p-1}u\bigl(\log(2+u^2)\bigr)^a\qquad
\text{in } \mathbb{R}^N\times[0,T),
\end{equation}
and determined the precise blow-up rate, agreeing with the rate predicted by the associated ordinary differential equation. In particular, every blow-up solution in the Sobolev subcritical range exhibits type-I blow-up.

Chabi and Souplet~\cite{MR4865249} extended the theory to nonlinear source term
of the form $u^pL(u)$,
focusing on the case where $L$ is a slowly varying function,
including logarithms.
Specially, they obtained a precise
description of the global blow-up behavior for radially symmetric decreasing solutions. Related results and topics can be
found in~\cite{MR4892727,MR3852471}.

%

The study of nonlinear Cauchy problems \eqref{Cauchy} has gradually been extended from Euclidean spaces to more general settings, such as graphs~\cite{MR4581147}, metric measure spaces~\cite{MR2981021}, and Riemannian manifolds~\cite{MR4340784}. In these frameworks, the global behavior of solutions is strongly influenced by the geometry of the ambient space. Quantities such as volume growth, heat-kernel decay, spectral bounds, and estimates for the graph distance often replace the role played by dimension and scaling in the classical theory. Discrete equations on graphs are especially noteworthy, since they arise naturally in models of diffusion on networks and, at the same time, exhibit geometric features that are absent from continuous domains.

Several contributions have addressed nonlinear equations on weighted or infinite graphs. For the power nonlinearity, the nonexistence result in~\cite{MR5007569} was obtained by combining graph-adapted cutoff functions with a test-function argument and a discrete integration formula. Finite-time blow-up on locally finite graphs was analyzed in~\cite{MR3794184} through auxiliary integral quantities satisfying suitable ordinary differential inequalities. In~\cite{MR5037980}, both global solvability and blow-up for~\eqref{Cauchy} were studied by using heat-semigroup estimates, spectral information for the graph Laplacian, the Duhamel representation, and supersolution constructions. Related elliptic problems have also received attention. Yamabe-type equations were treated in~\cite{MR3542963,MR4397680} by variational and finite-domain approximation methods, whereas Schr\"odinger equations in~\cite{MR4733143,MR4568177} were investigated through constrained variational principles, Nehari-type arguments, and compactness techniques.

We next focus on the problem \eqref{equ}, which is the main subject of this paper. First, 
for the reader's convenience, we recall some basic notation on graphs that will be used throughout the paper. Denote a graph
 $G=(V,E)$ with the vertex set $V$ and the edge set $E$. For any $x,y\in V$, $y\sim x$ means $xy\in E$. Let $\mu: V\rightarrow \mathbb{R}^+$ be a finite measure and we assume positive
symmetric weights $w_{xy}=w_{yx}$ on edges
$xy\in E$. For any $x\in V$,  $deg(x)$ is  the degree of $x$, denoting  the number of neighbors of $x$. If $deg(x)<\infty$, then $G=(V,E)$ is called
a locally finite graph. In our paper, we always assume that $G=(V,E)$ is an infinite locally finite connected graph without loops or multiple edges.

For a measure $\mu$ and a weight $w_{xy}$, we define
\begin{equation*}
D_{\mu}:=\sup\limits_{x\in V}\frac{\sum\limits_{x\in V}w_{xy}(x)}{\mu(x)}.
\end{equation*}
The integral of function $u(x)$
is denoted by
\begin{equation*}
\int_V u(x) d\mu
:=\sum_{x\in  V}\mu(x)u(x)
\end{equation*}
and for interval $I\subset\mathbb{R}$ and integer $n\geq0$, we define
\begin{equation*}
C_V^n(I):=
\{
u:V\times I\rightarrow\mathbb{R}:u(x,\cdot)\in C^n(I)\quad\text{for each}\quad x\in V
\},
\end{equation*}
and
\begin{equation*}
L_V^1(I):=
\{
u:V\times I\rightarrow\mathbb{R}:u(x,\cdot)\in L^1(I)\quad\text{for each}\quad x\in V
\}.
\end{equation*}
For a function $u:V\times[0,T)\rightarrow\mathbb{R}$, if the term $u_t$ appear, then we implicitly assume that $u\in C_V^1(0,T)$.
We say
$u=u(x,t;\lambda\phi):
V\times[0,T)\rightarrow\mathbb{R}^+$  is a solution of \eqref{equ} in $[0,T)$, if $u(x,\cdot)\in C^1_V(0,T)\cap C_V^0[0,T)$
satisfies \eqref{equ} and
$u\in L^{\infty}(V\times[0,T'])$ for any $0<T'<T$. The lifespan (maximal existence time) of
$u=u(x,t;\lambda\phi)$ is defined by
$$T_\lambda:=\sup\left\{T>0:\eqref{equ}\text{ admits a solution on }[0,T)
\text{ with initial datum }\lambda\phi\right\}.$$
If $T_\lambda<\infty$, then $T_\lambda$ is called the blow-up time.

Let $\Omega$ be a finite subset in $V$. Using the similar notations in \cite{cvhuyuanyang}, the mapping  $\pi_{\Omega}:l^2(V,\mu)\rightarrow l^2(\Omega,\mu)$ is a restriction operator. For $u\in l^2(V,\mu)$, we have
$\pi_{\Omega}u\in l^2(\Omega,\mu)$.
The mapping $\xi_{\Omega}:l^2(\Omega,\mu)\rightarrow l^2(V,\mu)$ is the canonical inclusion extending
$u\in l^2(\Omega,\mu)$ by zero outside $\Omega$.
Now we define $\Delta_{\Omega}:=\pi_{\Omega}\Delta\xi_{\Omega}$.
The self-adjointness of $\Delta$ implies the  self-adjointness of $\Delta_{\Omega}$ and $\pi_{\Omega}=\xi^*_{\Omega}$, then the corresponding
eigenvalues are $0\leq\lambda_1\leq\lambda_2
\leq\cdots\leq\cdots\lambda_m$ and $m=\#\Omega$.
Furthermore, when $\Omega$ is connected and $\partial(V\backslash\Omega)\neq\emptyset$, then $\lambda_1(\Omega)>0$, with $\lambda_1(\Omega)<\lambda_2(\Omega)$ and we can choose eigenfunction $\phi(x)>0$ in $\Omega$ associated with $\lambda_1(\Omega)$  and $\sum\limits_{x\in\Omega}\mu(x)\phi(x)=1$ after normalizing.

Throughout this paper, we always assume  $\Omega\subset V$ is finite connected and the boundary of $\Omega$ is $\partial\Omega=\{x\in\Omega:\ \text{there exits}\ y\in\Omega^c\ \text{such that}\ y\sim x
\}$. We also use the notation $C(\alpha,\beta)$ to mean a constant depending only on $\alpha$ and $\beta$. Now, we state our main results.

\begin{theorem}\label{thm1}
Let
$p=1$, $q>1$ and $u$ be a positive solution to \eqref{equ}. Assume that
\[
D_{\mu}<\infty,\qquad
\eta(0)\ge1,
\]
where $\eta(0)$ is defined in \eqref{1***}. Then there exists a constant
$C=C(q,V)>0$ such that
$T_\lambda<\infty$ whenever $\lambda>\Lambda$ and $\Lambda$ satisfies
\begin{equation*}
\Lambda\cdot\inf_{x\in V}\psi\geq \max\{ C(q,V ),e^{1-q} \}.
\end{equation*}
Furthermore,
$$
\lim_{\lambda\to +\infty}
\big(\log\left(\lambda\|\psi\|_{\infty}\right)\big)^{q-1}T_\lambda
=\frac{1}{q-1}.
$$
\end{theorem}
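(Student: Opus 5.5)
Two ingredients give matching bounds: Kaplan's first-eigenfunction method for the upper bound on $T_\lambda$ (blow-up), and the discrete Phragm\'en--Lindel\"of principle of Hu--Wang \cite{cvhuyuanyang} for the lower bound, via comparison with the spatially homogeneous ODE solution.

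The model ODE is $y'=y(\log y)^q$, $y(0)=M$. Substituting $s=\log y$ gives $s'=s^q$, so
\[
s(t)=\bigl((\log M)^{1-q}-(q-1)t\bigr)^{-1/(q-1)}.
\]
The blow-up time is therefore $T(M)=\frac{(\log M)^{1-q}}{q-1}$, and the claimed limit says $T_\lambda\sim T(\lambda\|\psi\|_\infty)$.

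\textbf{Lower bound.} Set $M=\lambda\|\psi\|_\infty$ and $\bar u(t)=y(t)$. Then $\bar u$ solves \eqref{equ} with $\Delta\bar u=0$ and $\bar u(0)\ge u(x,0)$. The difference $w=u-\bar u$ satisfies
\[
w_t-\Delta w=c(x,t)\,w,
\]
where $c$ comes from the mean value theorem applied to $f(u)=u|\log u|^q$. On $[0,T']$ with $T'<\min(T_\lambda,T(M))$ both functions are bounded, so $c$ is bounded. Here $D_\mu<\infty$ guarantees that $\Delta$ is a bounded operator, which is exactly the hypothesis under which the discrete Phragm\'en--Lindel\"of principle of Hu--Wang applies. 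It gives $w\le 0$, hence $u\le\bar u$ and $T_\lambda\ge T(M)$. The standard continuation criterion (a solution that stays bounded extends) is needed to conclude this. Consequently
\[
\liminf_{\lambda\to\infty}(\log M)^{q-1}T_\lambda\ge\frac{1}{q-1}.
\]

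\textbf{Upper bound.} Fix $\varepsilon>0$. Choose a finite connected $\Omega$ containing a vertex where $\psi\ge(1-\varepsilon)\|\psi\|_\infty$, and let $\phi$ be its normalized first eigenfunction. Set
\[
\eta(t)=\sum_{x\in\Omega}\mu(x)\phi(x)u(x,t);
\]
I take this to be the quantity $\eta$ of \eqref{1***}. Summation by parts, using $\phi=0$ outside $\Omega$ and $u>0$, gives
\[
\sum_{x\in\Omega}\mu\phi\,\Delta u\ \ge\ -\lambda_1(\Omega)\,\eta.
\]
The hypothesis $\lambda\inf\psi\ge e^{1-q}$ (at least $e^{1-q}$; for blow-up one wants $\log u\ge1$, which the assumption $\eta(0)\ge1$ together with monotonicity is meant to give) places us in the convex regime: $f(s)=s(\log s)^q$ is convex for $s\ge 1$. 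Jensen's inequality with respect to the probability measure $\mu\phi$ on $\Omega$ then yields
\[
\eta'\ge f(\eta)-\lambda_1\eta=\eta\bigl((\log\eta)^q-\lambda_1\bigr).
\]
Once $\log\eta(0)$ is large compared with $\lambda_1^{1/q}$, which is what the constant $C(q,V)$ encodes, the right side is positive and $\eta$ increases. Comparison for this ODE then gives
\[
T_\lambda\le\int_{\log\eta(0)}^\infty\frac{ds}{s^q-\lambda_1}=\frac{(\log\eta(0))^{1-q}}{q-1}\bigl(1+o(1)\bigr).
\]
Since $\eta(0)=\lambda\sum\mu\phi\psi\ge c\,\lambda$, we get $\log\eta(0)=\log\lambda+O(1)=\log M+O(1)$. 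Hence
\[
\limsup_{\lambda\to\infty}(\log M)^{q-1}T_\lambda\le\frac{1}{q-1}.
\]
Finite-time blow-up for all $\lambda>\Lambda$ follows from the same inequality.

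\textbf{Main obstacle.} Notably, the $\varepsilon$-localization is unnecessary in the logarithmic case: any fixed $\Omega$ works because the constant is absorbed into $\log$. The real difficulties are twofold. First, the Jensen/convexity step requires $u\ge1$ (indeed $\log u\ge1$) on $\Omega$ for all times, not just at $t=0$. I would obtain this by comparison from below with the constant solution $\lambda\inf\psi$, which is a subsolution and increasing whenever $\lambda\inf\psi>1$; this is precisely where the hypothesis on $\inf\psi$ enters. Second, the Phragm\'en--Lindel\"of comparison needs the boundedness hypotheses on an infinite graph to be checked carefully.
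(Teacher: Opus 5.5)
Your proposal is correct and follows essentially the same route as the paper. The lower bound uses the spatially homogeneous ODE solution as an upper solution, compared through the Hu--Wang Phragm\'en--Lindel\"of principle (Lemmas \ref{max} and \ref{lemcomparision}); the upper bound uses Kaplan's functional $\eta$ with Jensen's inequality, where your exact bound $\int_{\log\eta(0)}^{\infty}\frac{ds}{s^q-\lambda_1}$ replaces the paper's $\epsilon$-argument, and your remark that any fixed $\Omega$ suffices matches the paper's single-vertex choice $\Omega=\{\tilde{x}\}$. One small correction: $s|\log s|^q$ is already convex on $[e^{1-q},\infty)$, so the paper only needs $u\ge\lambda\inf\psi\ge e^{1-q}$ (Lemma \ref{convex}) rather than $u\ge1$ (and $\eta\ge1$ would not give a pointwise bound anyway), though your stronger requirement is harmless since $C(q,V)$ may be taken $\ge1$.
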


Note that, by \eqref{1***}, the conditions
$\Lambda\inf\limits_{x\in V}\psi(x)\geq e^{1-q}$
and $\eta(0)\geq1$ automatically hold whenever
$\Lambda\inf\limits_{x\in V}\psi(x)\geq 1$.

In Theorem \ref{thm1}, we denote $\inf\psi=\inf_{x\in V}\psi(x) $ and
require that $\Lambda\inf\psi\ge e^{1-q}$, because we need to use
Jensen's inequality and the function $f(x):=x|\log x|^q$ is convex for $x\geq e^{1-q}$. Now if $\inf \psi=0$ (which should be viewed as $\inf\psi\rightarrow 0^+$ such that $\inf\psi\left|\log (\inf\psi)\right|^q=0$), then the following result shows that $u(x_i,t)$ will blow up in finite time if $\lambda \psi(x_i)$ is suitably large.
\begin{theorem}\label{thm2}
Let $p=1, q>1$ and $u$ be a positive solution to \eqref{equ}. Suppose that
\[
D_\mu<\infty,\quad \inf_{x\in V}\psi\geq 0, \quad\deg(x_i)=m\geq 1
\] for some $x_i\in V$ such that $\lambda\psi(x_i)>C(V)\exp\left\{m^{1/q}\right\} $, then the positive solution $u$ to problem
\eqref{equ} (actually, $u(x_i,t)$) blows up in finite time. Furthermore, we have
$$\lim\limits_{\lambda\rightarrow+\infty}(\log(\lambda\psi(x_i)))^{q-1}T_{\lambda}
\leq\displaystyle\frac{1}{q-1}\leq \lim\limits_{\lambda\rightarrow+\infty}(\log(\lambda\|\psi\|_{\infty}))^{q-1}T_{\lambda}.$$
\end{theorem}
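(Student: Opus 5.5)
The plan is to work directly at the single vertex $x_i$, which is the ``more direct argument'' announced in the abstract. Write $f(s)=s|\log s|^q$ and $\deg(x_i)=m$. At $x_i$ the equation reads
\[
u_t(x_i,t)=\frac{1}{\mu(x_i)}\sum_{y\sim x_i}w_{x_iy}\bigl(u(y,t)-u(x_i,t)\bigr)+f\bigl(u(x_i,t)\bigr).
\]
Since $u>0$, the neighbour terms are nonnegative, so
\[
u_t(x_i,t)\ge f\bigl(u(x_i,t)\bigr)-D_\mu\,u(x_i,t).
\]
Set $v(t)=u(x_i,t)$, so that $v(0)=\lambda\psi(x_i)$. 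We then have the scalar differential inequality $v'\ge v\bigl(|\log v|^q-D_\mu\bigr)$.

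For the blow-up part, choose the threshold so that $\log v(0)$ is large compared with $D_\mu^{1/q}$. The paper phrases this as $C(V)\exp\{m^{1/q}\}$; for instance, $D_\mu$ can be bounded via $m\max w/\mu$ and absorbed into $C(V)$. Then $|\log v(0)|^q>2D_\mu$, so $v'>0$ at $t=0$, and by a continuity argument $v$ stays increasing. Hence $v'\ge v(\log v)^q(1-\varepsilon_0)$ with $\varepsilon_0=D_\mu/(\log v(0))^q\le 1/2$. Substituting $s=\log v$ gives $s'\ge(1-\varepsilon_0)s^q$, and integrating shows that $s$, hence $u(x_i,\cdot)$, becomes infinite no later than
\[
T^*=\frac{1}{(1-\varepsilon_0)(q-1)\bigl(\log(\lambda\psi(x_i))\bigr)^{q-1}}.
\]
Since a solution must be bounded on $V\times[0,T']$ for every $T'<T_\lambda$, it follows that $T_\lambda\le T^*$. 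As $\lambda\to\infty$ we have $\varepsilon_0\to0$, so $\limsup(\log(\lambda\psi(x_i)))^{q-1}T_\lambda\le 1/(q-1)$, which is the left inequality.

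For the right inequality, I would build a spatially constant supersolution. Let $w(t)$ solve $w'=f(w)$ with $w(0)=\lambda\|\psi\|_\infty$. Since $\Delta w=0$, $w$ solves \eqref{equ} with larger initial data. For large $\lambda$ we have $w\ge e$, where $f$ is increasing and locally Lipschitz, so the comparison principle on graphs (as used in Hu--Wang with $D_\mu<\infty$) gives $u\le w$ on the existence interval of $w$. Hence $u$ stays bounded until $w$ blows up, so
\[
T_\lambda\ge\int_{\log(\lambda\|\psi\|_\infty)}^\infty s^{-q}\,ds=\frac{1}{(q-1)\bigl(\log(\lambda\|\psi\|_\infty)\bigr)^{q-1}}.
\]
This yields the liminf bound.

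The main obstacle is the comparison principle on an infinite graph with bounded solutions and a nonlinearity that is only locally Lipschitz. The point where $f$ fails to be monotone, near $s=1$ and $s<1$, may cause trouble, but it is harmless because $w\ge e$ and $f$ is monotone on $[e,\infty)$. I would first try to invoke the Hu--Wang discrete Phragm\'en--Lindel\"of principle applied to $z=w-u$. Its equation is $z_t-\Delta z=c(x,t)z$ with $c$ bounded on $[0,T']$ by boundedness of $u$ and $w$. One must also take some care that ``limits'' are interpreted as limsup/liminf.
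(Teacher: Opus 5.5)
Your proposal is correct and follows essentially the same route as the paper: you use the one-vertex inequality $u_t(x_i,t)\ge u(x_i,t)\bigl((\log u(x_i,t))^q-\text{const}\bigr)$ and a first-time/continuity argument for monotonicity, then integrate (your factor $1-\varepsilon_0$ is exactly the paper's $\epsilon/(1+\epsilon)$). For the lower bound you compare $u$ with the spatially constant supersolution solving $w'=w(\log w)^q$ via the discrete Phragm\'en--Lindel\"of principle. The only differences are cosmetic: you keep general weights through $D_\mu$ instead of normalizing $w_{xy}=\mu(x)=1$, and you say somewhat more explicitly than the paper why the coefficient $c$ in the comparison stays bounded and that the ``limits'' should be read as $\limsup$/$\liminf$.
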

\begin{remark}
The statements and results in Theorem \ref{thm2} are different from those in Theorem \ref{thm1}. Roughly speaking, in Theorem \ref{thm1}, one focuses on the blow-up time globally in $V$, i.e., one tries to find the $u(x_j,t)$ with the earliest blow-up time for some $x_j\in V$ (usually, $u(x_j,t)$ is the largest one for all $x\in V$). While in Theorem \ref{thm2}, one focuses on the blow-up time locally in $V$, i.e., if one can find some $x_i\in V$ such that the initial datum $u(x_i,0)$ is suitably large, then $u(x_i,t)$ will blow up in finite time and the blow-up time of $u$ is certainly less than or equal to this of $u(x_i,t)$.
\end{remark}

Similar to the two results above, for $p>1$ and $q\geq 1$, we have the following results.
\begin{theorem}\label{thm4}
Suppose $p>1$, $q\geq1$ and $u$ is a positive solution to \eqref{equ}. If
\begin{equation*}
D_\mu<\infty,\quad \inf_{x\in V}\psi\geq 0, \quad\deg(x_i)=m\geq 1
\end{equation*}
for some $x_i\in V$ such that
\begin{equation*}
\left(\lambda\psi(x_i)\right)^{p-1}\left(\log \left(\lambda\psi(x_i)\right)\right)^q>C(V)m,\quad \lambda\psi(x_i)\geq 1,
\end{equation*}
then the lifespan of solution of the equation \eqref{equ} is finite. Moreover, we have
\begin{equation*}
\lim\limits_{\lambda\rightarrow+\infty}
(\lambda\psi(x_i))^{p-1}
(\log(\lambda\psi(x_i)))^{q}T_\lambda\leq\frac{1}{p-1}\leq\lim\limits_{\lambda\rightarrow+\infty}
(\lambda\|\psi\|_{\infty} )^{p-1}
(\log(\lambda\|\psi\|_{\infty}))^{q}T_\lambda.
\end{equation*}
\end{theorem}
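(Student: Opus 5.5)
We argue with a local, one-vertex ODE comparison at $x_i$ for the upper bound on $T_\lambda$, and with a spatially homogeneous supersolution for the lower bound. We do not use the first-eigenvalue method.

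\textbf{Upper bound.} Put $v(t)=u(x_i,t)$ and $f(s)=s^p|\log s|^q$. Positivity of $u$ gives
\[
\Delta u(x_i)=\frac{1}{\mu(x_i)}\sum_{y\sim x_i}w_{x_iy}\bigl(u(y)-u(x_i)\bigr)\ \ge\ -\frac{\sum_{y\sim x_i}w_{x_iy}}{\mu(x_i)}\,v\ \ge\ -D_\mu v,
\]
so that
\[
v'\ \ge\ -D_\mu v+v^p(\log v)^q \qquad\text{while } v\ge 1 .
\]
The constant $C(V)m$ plays the role of $D_\mu$; one could also bound $\sum_y w_{x_iy}/\mu(x_i)$ crudely by $m\max_y w_{x_iy}/\mu(x_i)$, which is where $m$ enters. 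The hypothesis $(\lambda\psi(x_i))^{p-1}(\log(\lambda\psi(x_i)))^q>C(V)m$ then says $g(v):=v^{p-1}(\log v)^q-D_\mu>0$ at $t=0$. Since $g$ is increasing for $v\ge1$, $v$ is nondecreasing and stays in the region $g>0$.

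Fix $\varepsilon\in(0,1)$ and take $\lambda$ large enough that $D_\mu\le \varepsilon\,(\lambda\psi(x_i))^{p-1}(\log(\lambda\psi(x_i)))^q$. Then for all $t$,
\[
v'\ \ge\ (1-\varepsilon)\,v^p(\log v)^q .
\]
Comparison with this ODE shows $v$ blows up no later than
\[
\frac{1}{1-\varepsilon}\int_{a}^{\infty}\frac{ds}{s^p(\log s)^q},\qquad a=\lambda\psi(x_i).
\]
Hence $T_\lambda$ is finite, since a solution must be bounded on $V\times[0,T']$ for every $T'<T_\lambda$.

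The main technical step is the asymptotics
\[
\int_a^\infty \frac{ds}{s^p(\log s)^q}\ \sim\ \frac{1}{(p-1)\,a^{p-1}(\log a)^q}\qquad (a\to\infty).
\]
To prove it, substitute $s=a\tau$, so the integral becomes $a^{1-p}\int_1^\infty \tau^{-p}(\log a+\log\tau)^{-q}\,d\tau$. Then $(\log a)^q$ times the integrand converges to $\tau^{-p}$ and is dominated by $\tau^{-p}$, so dominated convergence applies. Multiplying by $a^{p-1}(\log a)^q$, taking the $\limsup$, and then letting $\varepsilon\to0$ gives the left inequality of the theorem.

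\textbf{Lower bound.} Let $M=\lambda\|\psi\|_\infty$ and let $W(t)$ solve $W'=f(W)$ with $W(0)=M$; it exists up to $T^*=\int_M^\infty ds/f(s)$. Because $W$ is spatially constant, $\Delta W=0$, so $W$ is a supersolution of \eqref{equ}. The solution $u$ is bounded on each $[0,T']$ and $f$ is locally Lipschitz on $[\delta,\infty)$, so the discrete comparison principle gives $u\le W$. This is the Phragm\'en--Lindel\"of principle of Hu--Wang cited in the introduction; for bounded solutions it can also be proved via a Gronwall argument on $\sup_V(u-W)_+$, using $D_\mu<\infty$. Consequently $u$ stays bounded on $[0,T']$ for every $T'<T^*$ and can be continued (local existence in $l^\infty$ holds since $D_\mu<\infty$ makes $\Delta$ a bounded operator), so $T_\lambda\ge T^*$. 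Applying the same integral asymptotics with $a=M$ gives the right inequality.

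I expect the main obstacle to be justifying the comparison and continuation on an infinite graph, and handling the region $u<1$ where $|\log u|$ is not monotone. For the latter, I would use $W\ge M\ge1$ and note that $f$ is locally Lipschitz on $(0,\infty)$ and positive there.
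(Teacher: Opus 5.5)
Your proposal is correct and is essentially the paper's proof. The shared steps are: a one-vertex differential inequality at $x_i$, obtained by dropping the nonnegative neighbour values; monotonicity of $u(x_i,\cdot)$; integration through $F(r)=\int_r^{\infty}ds/(s^p(\log s)^q)$; and the spatially constant solution $F^{-1}(F(\lambda\|\psi\|_{\infty})-t)$ as a supersolution for the lower bound.

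The differences are cosmetic. The paper defines $\epsilon$ by $1+\epsilon=(\lambda\psi(x_i))^{p-1}(\log(\lambda\psi(x_i)))^q/m$ rather than fixing $\varepsilon$ and taking $\lambda$ large; choosing $\varepsilon$ that way also gives finiteness of $T_\lambda$ for every $\lambda$ satisfying the hypothesis, not only for large $\lambda$. For the upper estimate the paper bounds $F(a)\le 1/((p-1)a^{p-1}(\log a)^q)$ directly instead of using dominated convergence. For $T_\lambda\ge F(\lambda\|\psi\|_\infty)$ it invokes Lemma~\ref{lemcomparision}, where you use comparison plus continuation.
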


\begin{theorem}\label{thm3}
Assume $p>1$, $q\geq1$ and $u$ is a positive solution to \eqref{equ}. Let
\begin{equation*}
D_{\mu}<\infty,\qquad
\eta(0)\ge1
\end{equation*}
where $\eta(0)$ is defined in \eqref{1***}.
Then there exists a constant
$C=C(p,q,V)>0$ such that
$T_\lambda<\infty$ whenever $\lambda>\Lambda$ and $\Lambda$ satisfies
\begin{equation*}
\Lambda\cdot\inf_{x\in V}\psi\geq \max\{ C(p,q,V ), 1 \}.
\end{equation*}
Furthermore,
\begin{equation*}
\lim\limits_{\lambda\rightarrow+\infty}
(\lambda\|\psi\|_{\infty} )^{p-1}
(\log(\lambda\|\psi\|_{\infty}))^{q}T_\lambda
=\frac{1}{p-1}.
\end{equation*}
\end{theorem}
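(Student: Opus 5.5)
We prove Theorem \ref{thm3} with the same two-sided strategy used for Theorem \ref{thm1}, replacing the ODE $y'=y|\log y|^q$ by $y'=f(y)$ with $f(s)=s^p(\log s)^q$ for $s\ge 1$. Set
\[
F(s):=\int_s^\infty \frac{d\sigma}{\sigma^p(\log\sigma)^q},
\]
which is finite for $p>1$, $q\ge1$, $s>1$. The spatially homogeneous solution starting from $M$ blows up exactly at time $F(M)$. A substitution $\sigma=s\tau$ together with $\log(s\tau)/\log s\to1$ locally uniformly in $\tau$ (dominated convergence, using $\tau^{-p}$ integrable on $[1,\infty)$) gives
\[
F(s)\sim \frac{1}{(p-1)s^{p-1}(\log s)^q}\qquad (s\to\infty).
\]
The limit in Theorem \ref{thm3} will therefore follow once we show $F(\lambda\|\psi\|_\infty)\le T_\lambda\le F(\lambda\|\psi\|_\infty)(1+o(1))$.

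\emph{Lower bound.} The constant-in-space function $\bar u(t)$ solving $\bar u'=f(\bar u)$ with $\bar u(0)=\lambda\|\psi\|_\infty\ge1$ is a supersolution, since $\Delta\bar u=0$. Note $f$ is increasing on $[1,\infty)$, and $u\ge \lambda\inf\psi\ge1$ is preserved because constants $\ge1$ are subsolutions. The discrete Phragm\'en-Lindel\"of principle of Hu-Wang \cite{cvhuyuanyang}, which needs $D_\mu<\infty$ and boundedness of $u$ on $V\times[0,T']$, then gives $u\le\bar u$. Hence $T_\lambda\ge F(\lambda\|\psi\|_\infty)$, and so $\liminf_{\lambda\to\infty}(\lambda\|\psi\|_\infty)^{p-1}(\log(\lambda\|\psi\|_\infty))^qT_\lambda\ge \frac1{p-1}$.

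\emph{Upper bound via Kaplan.} Fix $\varepsilon>0$. Choose a vertex $x_0$ with $\psi(x_0)\ge(1-\varepsilon)\|\psi\|_\infty$ and a finite connected $\Omega\ni x_0$. Let $\phi$ be the normalized first Dirichlet eigenfunction on $\Omega$, and set
\[
\eta(t)=\sum_{x\in\Omega}\mu(x)\phi(x)u(x,t),
\]
which is the quantity in \eqref{1***}. Using self-adjointness of $\Delta_\Omega$ and $u>0$ outside $\Omega$, the boundary terms have a good sign, so
\[
\eta'\ge-\lambda_1(\Omega)\eta+\sum_\Omega\mu\phi f(u).
\]
Since $f$ is convex on $[1,\infty)$ (check $f''\ge0$ for $s\ge1$ with $p>1$, $q\ge1$), Jensen's inequality with the probability weights $\mu\phi$ gives $\eta'\ge f(\eta)-\lambda_1\eta$.

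The issue is that $\eta(0)$ is only an average of $\lambda\psi$ over $\Omega$. To get the sharp constant, we first take the smallest useful set: shrinking $\Omega$ toward $\{x_0\}$ makes $\eta(0)$ approach $\lambda\psi(x_0)$. For the singleton $\Omega=\{x_0\}$, $\phi=1/\mu(x_0)$ and $\lambda_1=\sum_y w_{x_0y}/\mu(x_0)\le D_\mu$, so $\eta(t)=u(x_0,t)$ exactly. Thus
\[
\eta'\ge f(\eta)-D_\mu\eta .
\]
For $\eta\ge\eta_0$ large we have $D_\mu\eta\le \varepsilon f(\eta)$, because $f(\eta)/\eta\to\infty$. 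This is where $\Lambda\inf\psi\ge C(p,q,V)$ and $\lambda\psi\ge1$ enter: they ensure $\eta(0)\ge\eta_0$ and that $\eta$ stays increasing. Then $\eta'\ge(1-\varepsilon)f(\eta)$, so
\[
T_\lambda\le \frac{F(\lambda\psi(x_0))}{1-\varepsilon}.
\]
Combining this with the asymptotics of $F$ and $\psi(x_0)\ge(1-\varepsilon)\|\psi\|_\infty$ gives $\limsup\le \frac{(1-\varepsilon)^{1-p}}{(1-\varepsilon)(p-1)}$, and letting $\varepsilon\to0$ yields $\frac1{p-1}$. If the supremum of $\psi$ is not attained, we choose $x_0$ depending on $\varepsilon$ only.

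\emph{Main obstacle.} The delicate point is making the absorption $D_\mu\eta\le\varepsilon f(\eta)$ uniform, and ensuring that $\lambda\to\infty$ alone pushes $\eta(0)$ beyond $\eta_0$. This holds because $\lambda\psi(x_0)\to\infty$ for the fixed vertex $x_0$. A secondary check is the applicability of the comparison principle on an infinite graph, which relies on $D_\mu<\infty$ and the $L^\infty$ bound in the definition of a solution. Finite-time blow-up for $\lambda>\Lambda$ follows from the same differential inequality once $\eta(0)\ge\max\{C(p,q,V),1\}$.
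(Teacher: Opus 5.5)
Your proposal is correct and follows essentially the paper's route. For the upper bound you use Kaplan's functional $\eta$ reduced to a singleton $\Omega=\{x_0\}$, which is the same pointwise argument used for Theorem~\ref{thm4}, and for the lower bound you use the spatially homogeneous supersolution $F^{-1}(F(\lambda\|\psi\|_\infty)-t)$ of \eqref{123} together with the comparison principle; your choice of a near-maximizing vertex with $\psi(x_0)\ge(1-\varepsilon)\|\psi\|_\infty$ followed by $\varepsilon\to0$ makes explicit the step the paper compresses into ``the arbitrary choice of $\tilde x$'', which for $p>1$ genuinely requires optimizing over the vertex because the factor $(\|\psi\|_\infty/\psi(x_0))^{p-1}$ does not disappear as $\lambda\to\infty$.
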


\section{The proof of Theorem \ref{thm1}}
For the Cauchy equation \eqref{equ}, we define the upper and lower solution on infinite locally finite graph.

\begin{defn}
Assume
$
u\in L^{\infty}(V\times(0,T])
\cap C_V([0,T])
\cap C_V^{1}((0,T]).
$
If $u$ satisfies
\[
\begin{cases}
u_t-\Delta u\geq(\leq)f(x,t,u), & (x,t)\in V\times(0,T],\\
u(x,0)\geq(\leq)\lambda\psi, & x\in V,
\end{cases}
\]
then $u$ is called an upper (a lower) solution of \eqref{equ} with $u^p|\log u|^q$ replaced by $f(x,t,u)$.
\end{defn}

Before proving Theorem \ref{thm1}, we introduce
the maximum principle and the comparison principle on graphs. The proofs of Lemmas \ref{premax} and \ref{max} can be found in \cite[Lemmas 3.1-3.2]{cvhuyuanyang}.

\begin{lemma}\label{premax}
Let $\Omega\subset V$ be a finite connected graph and $z$ satisfy the equation
\begin{equation*}
z_t-\Delta z-cz\geq 0\quad \text{in}
\quad (\Omega\backslash\partial\Omega)\times(t_0,T],
\end{equation*}
where $c(x,t)<0$ in $(\Omega\backslash\partial\Omega)\times(t_0,T]$.
If $\inf\limits_{\Omega\times[t_0,T]}z<0$, then $z$ cannot reach its minimum in
$(\Omega\backslash\partial\Omega)\times(t_0,T].$

\end{lemma}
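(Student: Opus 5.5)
We argue by contradiction, using a minimum point of $z$. Suppose $\inf_{\Omega\times[t_0,T]}z<0$ and that this minimum is attained at some $(x_0,t_1)$ with $x_0\in\Omega\backslash\partial\Omega$ and $t_1\in(t_0,T]$. The domain $\Omega\times[t_0,T]$ is a finite set of vertices times a compact interval, and $z(x,\cdot)$ is continuous for each $x$. Hence the infimum is a genuine minimum, and it makes sense to test the inequality at the minimizer. The aim is to show that every term of $z_t-\Delta z-cz$ has a definite sign at $(x_0,t_1)$, with one term strictly negative.

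First, the time derivative. Since $t\mapsto z(x_0,t)$ attains its minimum over $[t_0,T]$ at $t_1>t_0$, we have $z_t(x_0,t_1)\le 0$. If $t_1<T$ this is an interior minimum, so $z_t(x_0,t_1)=0$. If $t_1=T$ it is a one-sided minimum at the right endpoint, where the backward difference quotient $(z(x_0,T)-z(x_0,T-h))/h\le 0$ gives $z_t\le0$. This uses that $z\in C^1$ up to $T$, which is implicit in the hypothesis that the inequality holds on $(t_0,T]$.

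Second, the Laplacian. Because $x_0\notin\partial\Omega$, every neighbour $y\sim x_0$ lies in $\Omega$, so $z(y,t_1)\ge z(x_0,t_1)$ by minimality. Therefore
\[
\Delta z(x_0,t_1)=\frac{1}{\mu(x_0)}\sum_{y\sim x_0}w_{x_0y}\bigl(z(y,t_1)-z(x_0,t_1)\bigr)\ge 0 .
\]
This step is where the restriction to $\Omega\backslash\partial\Omega$ is essential: a boundary vertex has neighbours outside $\Omega$ whose values are not controlled.

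Third, the zero-order term. At the minimizer $z(x_0,t_1)<0$ and $c(x_0,t_1)<0$, so $-c(x_0,t_1)z(x_0,t_1)<0$. Combining the three steps,
\[
z_t-\Delta z-cz\big|_{(x_0,t_1)}\le 0-0+\bigl(-cz\bigr)(x_0,t_1)<0,
\]
which contradicts the hypothesis $z_t-\Delta z-cz\ge0$ there. So the negative minimum cannot be attained in $(\Omega\backslash\partial\Omega)\times(t_0,T]$.

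The argument is short, and the only genuinely delicate points are the endpoint case $t_1=T$ and the use of $x_0\notin\partial\Omega$ for the Laplacian sign. The strict negativity of $c$ is exactly what gives a strict inequality, so no exponential-weight perturbation $z e^{-\varepsilon t}$ is needed.
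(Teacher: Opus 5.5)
Your proof is correct and is essentially the same argument the paper relies on. The paper does not reprove this lemma but cites \cite[Lemma 3.1]{cvhuyuanyang}, whose standard proof evaluates the inequality at a negative minimizer $(x_0,t_1)$ with $x_0\in\Omega\backslash\partial\Omega$ and $t_1\in(t_0,T]$: there $z_t\le 0$ (including the one-sided case $t_1=T$), $\Delta z\ge 0$ because every neighbour of $x_0$ lies in $\Omega$, and $-cz<0$, which contradicts $z_t-\Delta z-cz\ge 0$.
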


We now introduce a discrete Phragm\'{e}n--Lindel\"{o}f principle for parabolic equations on infinite locally finite graphs, whose proof can be found in \cite[Lemma 3.2]{cvhuyuanyang}.

\begin{lemma}\label{max}
Let $D_{\mu}\leq\infty$ and $T\in(0,+\infty]$.
Assume $c,z\in L^{\infty}(V\times I)$ for any bounded interval $I\subset[0,T)$ and $z\in C_V^1((0,T))\cap C_V^0([0,T))$. If $z$ satisfies
\begin{equation*}
\left\{
\begin{array}{lll}
z_t-\Delta z-cz\geq0,\ \ (x,t)\in V\times(0,T)\\[20pt]
z(x,0)\geq0,\ \ x\in V,
\end{array}
\right.
\end{equation*}
then $z\geq0$ on $V\times(0,T).$
\end{lemma}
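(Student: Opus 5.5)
The plan is to prove the statement by the classical barrier argument, adapted to the graph. I read the hypothesis as $D_\mu<\infty$, which is how the lemma is used in Theorem \ref{thm1}. The argument has four steps: remove the zero-order term, add a small barrier that grows at spatial infinity, localize the negative set to a finite ball, and rule out an interior negative minimum.

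\emph{Step 1: reduction to a positive zero-order coefficient.} Fix $T'<T$. By assumption there is $M$ with $|c|\le M$ and $|z|\le M$ on $V\times[0,T']$. Choose $k>M$ and set $w=e^{-kt}z$. A direct computation gives
\[
w_t-\Delta w+(k-c)w\ge 0 \quad\text{in } V\times(0,T'],
\]
with $k-c\ge k-M>0$, together with $w(\cdot,0)\ge0$ and $w$ bounded. It then suffices to show $w\ge0$ on $V\times(0,T']$, and afterwards let $T'\uparrow T$.

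\emph{Step 2: barrier.} Fix $x_0\in V$, let $d(x)=d(x_0,x)$ be the graph distance, and set
\[
h(x,t)=e^{D_\mu t}\bigl(1+d(x)\bigr).
\]
Since $|d(y)-d(x)|\le1$ for $y\sim x$, we get
\[
\Delta(1+d)(x)\le \frac{1}{\mu(x)}\sum_{y\sim x}w_{xy}\le D_\mu\le D_\mu(1+d(x)).
\]
Hence $h_t-\Delta h\ge0$, and $(k-c)h>0$. For $\varepsilon>0$ put $v=w+\varepsilon h$. Then
\[
v_t-\Delta v+(k-c)v>0 \quad\text{on } V\times(0,T'],
\]
and $v(\cdot,0)>0$.

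\emph{Step 3: localization and minimum.} Because $w$ is bounded and $h\ge1+d(x)\to\infty$ as $d(x)\to\infty$, there is $R$ such that $v>0$ whenever $d(x)>R$, for all $t\in[0,T']$. By local finiteness the ball $B_R(x_0)$ is finite.

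Suppose $v<0$ somewhere in $V\times[0,T']$. Then $\min v$ over $B_R\times[0,T']$ is negative. It is attained, since each $v(x,\cdot)$ is continuous on the compact interval $[0,T']$ and the ball has finitely many vertices. Call the minimizer $(x_1,t_1)$. It is also a global minimum over $V\times[0,T']$, because $v>0$ outside $B_R$. Moreover $t_1>0$, since $v(\cdot,0)>0$.

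At $(x_1,t_1)$ we have $v_t\le0$ (one-sided derivative if $t_1=T'$) and $\Delta v(x_1,t_1)\ge0$, because every neighbour has value at least the minimum. Also $(k-c)v<0$ there. These give $v_t-\Delta v+(k-c)v<0$ at $(x_1,t_1)$, contradicting Step 2.

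\emph{Step 4: conclusion.} Step 3 shows $v\ge0$ on $V\times[0,T']$, that is, $w\ge-\varepsilon h$ for every $\varepsilon>0$. Letting $\varepsilon\to0$ gives $w\ge0$, hence $z\ge0$ on $V\times(0,T']$, and then on $V\times(0,T)$ since $T'<T$ was arbitrary.

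The main obstacle is the infinite graph: a minimum need not be attained, which is exactly what the growing barrier handles. That step relies on the Laplacian bound for $d$, which uses $D_\mu<\infty$, and on finiteness of balls, which uses local finiteness. This is where the hypotheses of the lemma enter.
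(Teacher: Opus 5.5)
Your proof is correct and complete. The paper gives no argument of its own and only cites Hu--Wang for this discrete Phragm\'en--Lindel\"of principle; your proof is the standard barrier scheme for such a principle, and Lemma~\ref{premax} is its companion finite-set step. The scheme has three parts: an exponential shift that makes the zero-order coefficient favorable, a barrier growing like the graph distance (its supersolution property is exactly where $D_\mu<\infty$ enters), and a contradiction at a negative minimum. The one difference is that you get a global minimum directly from $v>0$ off a finite ball, instead of applying Lemma~\ref{premax} on that ball.

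You are also right to read the hypothesis as $D_\mu<\infty$. On stochastically incomplete graphs, which necessarily have unbounded weighted degree, there are nonzero bounded solutions of the heat equation with zero initial data, so the statement fails as literally written with $D_\mu\le\infty$.
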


The following lemma is a direct consequence of Lemmas \ref{premax} and \ref{max}.

\begin{lemma}\label{convex}
Let $D_{\mu}<\infty$ and $T\in(0,\infty]$.
Suppose $z\in L^{\infty}(V\times I)$ for any bounded interval $I\subset [0,T)$ and $z\in C^1_V((0,T))\cap C_V^0([0,T))$. If $z$ satisfies
the equation
\begin{equation*}
\left\{
\begin{array}{lll}
z_t-\Delta z\geq0,\ (x,t)\in V\times(0,T)\\[10pt]
z(x,0)=z_0,\ x\in V,
\end{array}
\right.
\end{equation*}
then we have
\begin{equation*}
z\geq\inf\limits_{x\in V} z_0(x).
\end{equation*}

\begin{proof}
Let $v=z-\inf\limits_{x\in V} z_0(x)$. Then $v_t$ satisfies the equation
\begin{equation*}
\left\{
\begin{array}{lll}
v_t-\Delta v\geq0,\ (x,t)\in V\times(0,T)\\[10pt]
v(x,0)=z_0-\inf\limits_{x\in V}z_0,\ x\in V,
\end{array}
\right.
\end{equation*}
According to Lemma \ref{max}, we have $v\geq0.$ Therefore, we obtain $z\geq\inf\limits_{x\in V} z_0(x)$.
\end{proof}
\end{lemma}

The following lemma compares the blow-up times of a upper solution and a low solution satisfying the same initial condition. A similar idea can be found in \cite[Lemma 4.2]{cvhuyuanyang}.

\begin{lemma}\label{lemcomparision}
Let $D_{\mu}<\infty$, $p\geq1$, $q\geq1$,
$t_i>0$, $u_i\in C_V([0,t_i))\cap C_V^1((0,t_i))$  and $u_i\in L^{\infty}(V\times[0,t_i'])$ for any given $0<t_i'<t_i$, $i=1,2$. Suppose $\lim\limits_{t\rightarrow t_i}
\|u_i(\cdot,t)\|_{l^{\infty}(V)}=\infty$, for $i=1,2$. If $u_i$, respectively, satisfies
\begin{equation}\label{014}
\left\{
\begin{array}{lll}
(u_1)_t-\Delta u_1 \geq u^p_1|\log u_1|^q,\ \ (x,t)\in V\times[0,t_1),\\[10pt]
u_1(x,0)\geq\psi(x),\ \  x\in V,
\end{array}
\right.
\end{equation}
and
\begin{equation}\label{015}
\left\{
\begin{array}{lll}
(u_2)_t-\Delta u_2 \leq u^p_2|\log u_2|^q,\ \ (x,t)\in V\times[0,t_2),\\[10pt]
u_2(x,0)\leq\psi(x),\ \  x\in V,
\end{array}
\right.
\end{equation}
then $t_1\leq t_2$.
\end{lemma}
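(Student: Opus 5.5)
We argue by contradiction: assume $t_2<t_1$ and show that $u_2$ stays bounded up to $t_2$, which contradicts $\lim_{t\to t_2}\|u_2(\cdot,t)\|_{l^\infty(V)}=\infty$. The natural tool is the comparison $u_2\le u_1$ on $V\times[0,t_2)$. If it holds, then for every $t<t_2$ we have $u_2(\cdot,t)\le u_1(\cdot,t)\le \|u_1\|_{L^\infty(V\times[0,t_2])}<\infty$, since $t_2<t_1$ and $u_1$ is bounded on $V\times[0,t_1']$ for any $t_1'<t_1$. Then $\|u_2(\cdot,t)\|_{l^\infty}$ cannot tend to infinity as $t\to t_2$.

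So the key step is the comparison. Fix $T'<t_2$ and set $z=u_1-u_2$ on $V\times[0,T']$. Subtracting the two inequalities gives
\[
z_t-\Delta z\ge f(u_1)-f(u_2),\qquad f(s)=s^p|\log s|^q .
\]
Both solutions are positive (the solutions in the paper are positive), so we may write $f(u_1)-f(u_2)=c(x,t)\,z$ with
\[
c=\frac{f(u_1)-f(u_2)}{u_1-u_2}\quad\text{when } u_1\neq u_2 ,
\]
and $c=f'(u_1)$ (or $0$) otherwise. This gives $z_t-\Delta z-cz\ge0$ and $z(x,0)\ge\psi-\psi=0$, so Lemma \ref{max} yields $z\ge0$ on $V\times(0,T')$. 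Letting $T'\uparrow t_2$ gives $u_2\le u_1$.

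The main obstacle is checking the hypothesis $c\in L^\infty(V\times[0,T'])$ required by Lemma \ref{max}. On $[0,T']$ both $u_i$ are bounded above by some $M$. The function $f$ is locally Lipschitz on $(0,\infty)$ for $q\ge1$, including at $s=1$, since $|\log s|^q$ is Lipschitz near $1$ when $q\ge1$. The difficulty is near $s=0$: for $p=1$, $f'(s)=|\log s|^q\pm q|\log s|^{q-1}$ blows up as $s\to0^+$. We therefore need positive lower bounds for $u_1$ and $u_2$ on $V\times[0,T']$.

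For $u_1$ this comes from Lemma \ref{convex}: since $(u_1)_t-\Delta u_1\ge0$, we get $u_1\ge\inf u_1(\cdot,0)$, which is positive whenever $\inf\psi>0$, as in the setting of Theorem \ref{thm1}. For $u_2$ no lower bound is actually needed. Where $u_2\ge u_1$ the quotient is controlled by the Lipschitz constant of $f$ on $[\inf u_1,M]$. Where $u_2<u_1$ we have $z>0$, so only the sign matters there. Hence we replace $c$ by a truncated bounded coefficient $\tilde c=c\,\mathbf 1_{\{u_2\ge u_1\}}$ plus the appropriate treatment of the complementary set. Concretely, on $\{z<0\}$ the identity $f(u_1)-f(u_2)=\tilde c z$ holds with $|\tilde c|\le L$, while on $\{z\ge0\}$ we set $\tilde c=0$ and use the inequality form of $z_t-\Delta z-\tilde cz\ge f(u_1)-f(u_2)-\tilde c z$.

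That last step has a gap: on $\{z\ge0\}$ the term $f(u_1)-f(u_2)$ may be negative, since $f$ is not monotone on $(0,1)$. To sidestep this, we first note that in every application $u_i\ge1$, so that $f$ is increasing. Alternatively, we apply Lemma \ref{max} to $z^-$ in the Kato-type sense. If neither works, we fall back on the local version, Lemma \ref{premax}, on the set where $z<0$.
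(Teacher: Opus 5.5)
Your skeleton is the paper's. You assume $t_2<t_1$, get $u_2\le u_1$ on $V\times[0,t_2)$ from Lemma \ref{max} applied to $z=u_1-u_2$ with the difference-quotient coefficient, and contradict the blow-up of $u_2$ with the boundedness of $u_1$ on $V\times[0,t_2]$. The paper just writes $f(u_1)-f(u_2)=f'(\xi)(u_1-u_2)$ and never checks that this coefficient is in $L^\infty$, so you are right to single out that step. But your own treatment of it does not close.

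The truncation $\tilde c=c\,\mathbf{1}_{\{u_2\ge u_1\}}$, with $\tilde c=0$ on $\{z\ge0\}$, fails exactly as you say, and none of the fallbacks repairs it:
(a) fails in the settings of Theorems \ref{thm2} and \ref{thm4}, where only $\inf\psi\ge0$ is assumed and the actual solution $u=u_2$ may be small at distant vertices; it would not prove the lemma as stated anyway.
(b) needs a version of Lemma \ref{max} for $z^-$ that the paper does not provide.
(c) cannot replace Lemma \ref{max}. Lemma \ref{premax} concerns a finite set with $c<0$, while $\{z<0\}$ may be infinite and a negative infimum need not be attained. Handling the behaviour at infinity is precisely what the Phragm\'en--Lindel\"of principle is for.

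The detour is unnecessary. Your remark that $u_2$ needs no lower bound is correct, and no truncation is needed either. Fix $T'<t_2$, let $M$ bound $u_1,u_2$ on $V\times[0,T']$, and put $m=\inf_V u_1(\cdot,0)$. Then $u_1\ge m$ by Lemma \ref{convex}, since $f\ge0$. If $m>0$, the full quotient $c$ is bounded:
\emph{Where $u_2\ge m/2$:} both values lie in $[m/2,M]$, on which $f$ is Lipschitz (also across $s=1$, since $q\ge1$).
\emph{Where $u_2<m/2$:} one has $u_1-u_2>m/2$, hence $|c|\le 2\sup_{(0,M]}f/m<\infty$, because $f\ge0$ and $f(s)\to0$ as $s\to0^+$.

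For $p>1$ even $m>0$ is superfluous, because $f$ is Lipschitz on all of $(0,M]$. For $p=1$, $m>0$ is a hypothesis missing from the statement. It does hold wherever the lemma is used, since there $u_1=\bar u\ge\lambda\|\psi\|_\infty$ is constant in space.

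Your truncation can also be rescued directly. On $\{z\ge0\}$, use $f(u_1)-f(u_2)\ge -Lz$ with $L=\max\{0,-\inf_{(0,M]}f'\}$. This $L$ is finite because $f'$ is bounded below near $0$.
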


\begin{proof}
If $t_1> t_2$, it follows from \eqref{014}--\eqref{015} that
\begin{equation*}
\left\{
\begin{array}{lll}
(u_1-u_2)_t-\Delta(u_1-u_2)
-\left(
u^p_1|\log u_1|^q-u^p_2|\log u_2|^q
\right)\geq0, \ (x,t)\in\ V\times[0,t_2),\\[10pt]
u_1(x,0)-u_2(x,0)\geq0,\ \  x\in V.
\end{array}
\right.
\end{equation*}
Denote $f(x)=x^p|\log x|^q$, for $x>0$, then
$$u^p_1|\log u_1|^q-u^p_2|\log u_2|^q=(u_1-u_2)f'(\xi),\quad \text{some }\xi\in(u_1,u_2).$$
By Lemma \ref{max},  $u_1-u_2\geq0$ on $V\times[0,t_2)$. Moreover,
$\lim\limits_{t\rightarrow t_2}
\|u_2(\cdot,t)\|_{l^{\infty}(V)}=\infty$, then
we obtain $\lim\limits_{t\rightarrow t_2}
\|u_1(\cdot,t)\|_{l^{\infty}(V)}=\infty$. But this is impossible because of $u_1\in L^{\infty}(V\times[0,t_2))$.

\end{proof}

The above lemma shows that the blow-up time of a upper solution cannot exceed that of a corresponding lower solution. This comparison property of blow-up times will be used in the sequel.

\begin{lemma}\label{ela0}Suppose $\Omega$ is a finite connected set in $V$. Let $\lambda_1$ be the smallest eigenvalue
of $-\Delta_{\Omega}$ and $\phi>0$ be the normalized corresponding eigenfunction.
The function  $\psi$ satisfies $\Lambda\inf\limits_{x\in V}\psi(x)\geq e^{1-q}$ and $\displaystyle\int_{\Omega}\lambda \psi\phi d\mu\geq1$, where $\Lambda$ is defined in Theorem \ref{thm1}.
If the equation \eqref{equ} has a global solution, then we have
\begin{equation*}
\left|\displaystyle\log\int_ {\Omega}\lambda\psi\phi d\mu\right|^q\leq\lambda_1.
\end{equation*}
\end{lemma}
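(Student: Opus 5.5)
We follow Kaplan's first eigenvalue method. Set
\[
\eta(t):=\int_\Omega u(x,t)\phi(x)\,d\mu ,
\]
so that $\eta(0)=\int_\Omega\lambda\psi\phi\,d\mu\ge 1$. The plan is to show that $\eta$ satisfies a differential inequality which forces blow-up unless $|\log\eta(0)|^q\le\lambda_1$.

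\textbf{Step 1: the differential inequality.} Multiply \eqref{equ} (with $p=1$) by $\phi$ and sum over $\Omega$. The diffusion term is handled by discrete summation by parts, using the symmetry of $w_{xy}$:
\[
\int_\Omega \phi\,\Delta u\,d\mu=\int_\Omega u\,\Delta_\Omega\phi\,d\mu+\sum_{x\in\Omega}\sum_{y\notin\Omega,\,y\sim x} w_{xy}\,\phi(x)u(y).
\]
Since $-\Delta_\Omega\phi=\lambda_1\phi$, the first term equals $-\lambda_1\eta$. In the boundary term $u>0$ and $\phi>0$, so it is nonnegative. Hence
\[
\eta'\ge -\lambda_1\eta+\int_\Omega u|\log u|^q\phi\,d\mu .
\]

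\textbf{Step 2: Jensen's inequality.} By Lemma \ref{convex} applied to $z=u$ (note $u_t-\Delta u\ge0$ because the source is nonnegative), we get $u\ge\inf\lambda\psi\ge\Lambda\inf\psi\ge e^{1-q}$. The function $f(s)=s|\log s|^q$ is convex on $[e^{1-q},\infty)$, and $\phi\,d\mu$ is a probability measure on $\Omega$. Jensen's inequality therefore gives $\int_\Omega f(u)\phi\,d\mu\ge f(\eta)$, and so
\[
\eta'\ge \eta\big(|\log\eta|^q-\lambda_1\big).
\]

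\textbf{Step 3: contradiction.} Suppose, for contradiction, that $|\log\eta(0)|^q>\lambda_1$. Since $\eta(0)\ge1$, the map $s\mapsto s(|\log s|^q-\lambda_1)$ is positive and increasing for $s\ge\eta(0)$. Consequently $\eta$ is increasing, so $|\log\eta(t)|^q-\lambda_1\ge|\log\eta(0)|^q-\lambda_1>0$ and $\eta$ stays in that region. Separating variables with $s=\log\eta$ gives
\[
\int_{\log\eta(0)}^{\infty}\frac{ds}{s^q-\lambda_1}<\infty \qquad (q>1).
\]
Hence $\eta$ must become infinite at a finite time bounded by this integral. This contradicts the global existence and boundedness of $u$ on $V\times[0,T']$ for every $T'$, since $\eta\le\sup_{\Omega}u$. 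Therefore $|\log\eta(0)|^q\le\lambda_1$.

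The main obstacle is Step 2. One must check that $u$ stays in the convexity region for all $t$, and that Jensen's inequality applies to the normalized measure $\phi\,d\mu$. This is exactly why the hypotheses $\Lambda\inf\psi\ge e^{1-q}$ and $\lambda>\Lambda$ are imposed. A smaller point is the sign of the boundary term in Step 1, which relies on the positivity of $u$ outside $\Omega$.
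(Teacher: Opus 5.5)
Your proposal is correct and follows essentially the same Kaplan eigenfunction argument as the paper. You use the same functional $\eta(t)=\int_\Omega u\phi\,d\mu$, the bound $\int_\Omega\phi\Delta u\,d\mu\ge-\lambda_1\eta$, Lemma \ref{convex} with Jensen on $[e^{1-q},\infty)$, and monotonicity of $\eta$ forcing finite-time blow-up; the differences are cosmetic. You write the Laplacian bound as an exact Green identity with a nonnegative boundary term rather than the paper's pointwise $\phi\Delta u\ge\phi\Delta_\Omega u$, and you integrate $ds/(s^q-\lambda_1)$ directly rather than absorbing $\lambda_1$ via $(\log\eta(0))^q=(1+\epsilon)\lambda_1$, which gives the explicit bound $\frac{1+\epsilon}{(q-1)\epsilon}(\log\eta(0))^{1-q}$ that the paper reuses in Lemma \ref{lim}.
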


\begin{proof}
For otherwise, if $\left|\displaystyle\log\int_ {\Omega}\lambda\psi\phi d\mu\right|^q>\lambda_1$, and $u$ is a global solution of \eqref{equ}, 
then we define
\begin{equation}\label{1***}
\eta(t)=\displaystyle\int_{\Omega} u(\cdot,t)\phi(\cdot)d\mu,\ \ t\geq0,
\end{equation}
it follows that $\left|\displaystyle\log
\eta(0)\right|^q=\left(\displaystyle\log
\eta(0)\right)^q>\lambda_1$, due to $\eta(0)=\displaystyle\int_{\Omega}\lambda \psi\phi d\mu\geq 1$. Hence, there exists a constant $\epsilon>0$ such that
\begin{equation}\label{1}
\left(\displaystyle\log\eta(0)\right)^q
=
(1+\epsilon)\lambda_1.
\end{equation}
We further compute that
\begin{equation}\label{05}
\begin{array}{lll}
\eta'(t)
&=\displaystyle\int_{\Omega}u_t\phi d\mu\\[20pt]
&=\displaystyle\int_{\Omega}\phi\Delta ud\mu
+\displaystyle\int_{\Omega} u |\log u|^q\phi d\mu.
\end{array}
\end{equation}
Next, we show that
\begin{equation*}
\phi(x)\Delta u(x,t)
\geq\phi(x)\Delta_{\Omega} u(x,t)
\ \ \text{for}\ \ x\in\Omega, \ \ t\geq0.
\end{equation*}
Indeed, for any $y\in V$ and $t\geq0$,
we have $u(y,t)\geq \zeta_{\Omega}u(y,t)$. This combined with $\phi(x)>0$ on $\Omega$ leads to
\begin{equation}\label{06}
\begin{array}{lll}
\phi(x)\Delta u(x,t)
&=\phi(x)\sum\limits_{y\in V:y\sim x}
\left(u(y,t)-u(x,t)\right)\displaystyle\frac{w_{xy}}{\mu(x)}\\[20pt]
&\geq\phi(x)\sum\limits_{y\in V:y\sim x}\left(\zeta_{\Omega}u(y,t)-u(x,t)\right)\displaystyle\frac{w_{xy}}{\mu(x)}\\[20pt]
&=\phi(x)\Delta_{\Omega}u(x,t).
\end{array}
\end{equation}
By Lemma \ref{convex}, $u\geq\lambda\inf\limits_{x\in V}\psi(x)\geq e^{1-q}$. It is easy to check that the function $x|\log x|^q$ is convex in $(e^{1-q},+\infty)$. Next,
putting the inequality \eqref{06} into \eqref{05}
and using Jensen's inequality imply
\begin{equation}\label{07}
\begin{array}{lll}
\eta'(t)
&=\displaystyle\int_{\Omega}\phi\Delta ud\mu
+\displaystyle\int_{\Omega} u |\log u|^q\phi d\mu\\[20pt]
&\geq\displaystyle\int_{\Omega}\phi\Delta_{\Omega}
ud\mu
+\displaystyle\int_{\Omega} u |\log u|^q\phi d\mu\\[20pt]
&\geq-\lambda_1
\displaystyle\int_{\Omega}u\phi d\mu
+\left(\displaystyle\int_V u\phi d\mu\right)\left|\log\displaystyle\int_V u\phi d\mu\right|^q\\[20pt]
&=-\lambda_1\eta(t)+\eta(t)\left|\log\eta(t)\right|^q.
\end{array}
\end{equation}
According to the continuity of $\eta(t)$ and \eqref{1},  there exists
a constant $\delta>0$ such that if $t\in(0,\delta)$, then $\eta'(t)>0$ for $t\in (0,\delta)$.
Next, we prove $\delta=+\infty$. For otherwise,
there exists $t_1\in(0,+\infty)$ such that $\eta'(t_1)=0$ and $\eta'(t_1)>0$ for any $t\in(0,t_1)$. Therefore, by \eqref{07}, we have
$(\log\eta(0))^q<(\log\eta(t_1))^q$ and
\begin{equation*}\label{08}
0=\eta'(t_1)\geq\eta(t_1)\left((\log\eta(t_1))^q-\lambda_1 \right)>0,
\end{equation*}
which is a contradiction. Hence, $\eta'(t)>0$ in $(0,+\infty)$. Then we obtain $\eta(t)>\eta(0)\geq1$ and
\begin{equation*}\label{010}
\left(\log\eta(t)\right)^q
>\left(\log\eta(0)\right)^q
=(1+\epsilon)\lambda_1,
\end{equation*}
which implies
\begin{equation}\label{011}
\lambda_1<\frac{1}{1+\epsilon}
\left(\log\eta(t)\right)^q.
\end{equation}
Using \eqref{07} together with \eqref{011}, we obtain
\begin{equation*}\label{012}
\eta'(t)\geq\eta(t)\cdot\frac{\epsilon}{1+\epsilon}
\cdot\left(\log\eta(t)\right)^q.
\end{equation*}
Consequently,
\begin{equation*}\label{013}
t\leq\frac{1+\epsilon}{(q-1)\epsilon}
\left(\log\eta(0)\right)^{1-q}.
\end{equation*}
Therefore,
\begin{equation*}
T_{\lambda}<+\infty.
\end{equation*}
However, this is impossible as $u$ is a global solution.

\end{proof}

The following lemma gives the asymptotic behavior of the lifespan $T_{\lambda}$
of the solution to \eqref{equ} as $\lambda\rightarrow +\infty$. It is a direct consequence of Lemma \ref{ela0}.

\begin{lemma}\label{lim}
Supposing $D_{\mu}<\infty$ and under the conditions in Lemma \ref{ela0}, there exists a constant $\Lambda$, as defined in Theorem \ref{thm1}, such that for any $\lambda>\Lambda,$
\begin{equation*}
T_{\lambda}<+\infty
\end{equation*}
and
\begin{equation*}
\lim\limits_{\lambda\rightarrow\infty}
(\log(\lambda\|\psi\|_{\infty}))^{q-1}T_{\lambda}=\frac{1}{q-1}.
\end{equation*}
\end{lemma}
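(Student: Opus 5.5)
We prove finiteness and the limit by bounding $T_\lambda$ from above and from below and checking that both bounds behave like $\frac{1}{q-1}(\log(\lambda\|\psi\|_\infty))^{1-q}$.

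\textbf{Finiteness.} Fix a finite connected $\Omega$ with $\partial(V\backslash\Omega)\neq\emptyset$, with first eigenvalue $\lambda_1=\lambda_1(\Omega)$ and normalized eigenfunction $\phi$. Since $\int_\Omega\phi\,d\mu=1$,
\[
\eta(0)=\int_\Omega\lambda\psi\phi\,d\mu\ge \lambda\inf\psi .
\]
Choose $\Lambda$ with $\Lambda\inf\psi\ge\max\{C(q,V),e^{1-q}\}$ and $C(q,V)>\exp(\lambda_1^{1/q})$. Then for every $\lambda>\Lambda$ we get $(\log\eta(0))^q>\lambda_1$. If $T_\lambda=\infty$, Lemma \ref{ela0} would give $(\log\eta(0))^q\le\lambda_1$, which is a contradiction. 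Hence $T_\lambda<\infty$.

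\textbf{Upper bound.} Here I sharpen the proof of Lemma \ref{ela0} rather than use only its conclusion. From \eqref{07},
\[
\eta'\ge\eta\big((\log\eta)^q-\lambda_1\big).
\]
Since $\eta$ is increasing, $\lambda_1\le\lambda_1(\log\eta(0))^{-q}(\log\eta)^q$, so
\[
\eta'\ge(1-\delta_\lambda)\,\eta(\log\eta)^q,\qquad \delta_\lambda=\lambda_1(\log\eta(0))^{-q}\to0 .
\]
Integrating this ODE comparison gives
\[
T_\lambda\le\frac{1}{(1-\delta_\lambda)(q-1)}(\log\eta(0))^{1-q}.
\]
To match $\|\psi\|_\infty$, I need $\log\eta(0)\sim\log(\lambda\|\psi\|_\infty)$. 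This holds if $\inf\psi>0$, since then
\[
\log(\lambda\inf\psi)\le\log\eta(0)\le\log(\lambda\|\psi\|_\infty),
\]
and the two outer quantities differ by the bounded amount $\log(\|\psi\|_\infty/\inf\psi)$. Their ratio therefore tends to $1$ as $\lambda\to\infty$. Consequently
\[
\limsup_{\lambda\to\infty}(\log(\lambda\|\psi\|_\infty))^{q-1}T_\lambda\le\frac{1}{q-1}.
\]

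\textbf{Lower bound.} Compare with the spatially homogeneous function $\bar u(t)$ solving
\[
\bar u'=\bar u(\log\bar u)^q,\qquad \bar u(0)=\lambda\|\psi\|_\infty\ (\ge1),
\]
whose blow-up time is exactly $\frac{1}{q-1}(\log(\lambda\|\psi\|_\infty))^{1-q}$. Since $\Delta\bar u=0$, $\bar u$ is an upper solution with $\bar u(0)\ge\lambda\psi$, while $u$ is a solution. Lemma \ref{lemcomparision} then gives that the blow-up time of $\bar u$ is at most $T_\lambda$. This yields
\[
\liminf_{\lambda\to\infty}(\log(\lambda\|\psi\|_\infty))^{q-1}T_\lambda\ge\frac{1}{q-1},
\]
and together with the upper bound, the limit equals $\frac{1}{q-1}$.

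The main obstacle is checking the hypotheses of Lemma \ref{lemcomparision}. First, $u$ must actually become unbounded in $l^\infty$ at $T_\lambda$; this follows from the definition of lifespan plus local existence for bounded data, since $D_\mu<\infty$ makes the system a Lipschitz ODE in $l^\infty$. Second, the mean value step needs $f'$ bounded on the relevant range, which holds because both functions stay $\ge e^{1-q}$ and are bounded on compact time intervals. A secondary subtlety is that the upper bound uses $\inf\psi>0$ so that $\log\eta(0)$ and $\log(\lambda\|\psi\|_\infty)$ are asymptotically equal; this is guaranteed by the standing assumption $\Lambda\inf\psi>0$.
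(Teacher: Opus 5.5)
Your proof is correct and follows the paper's argument. The lower bound comes from the spatially homogeneous ODE upper solution $\bar u$ together with Lemma \ref{lemcomparision}. The upper bound comes from Kaplan's functional $\eta$ with $\eta'\ge(1-\delta_\lambda)\eta(\log\eta)^q$, where your $\delta_\lambda$ is the paper's $1/(1+\epsilon)$.

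The only variation is how you pass from $\log\eta(0)$ to $\log(\lambda\|\psi\|_{\infty})$. You use the sandwich $\lambda\inf\psi\le\eta(0)\le\lambda\|\psi\|_{\infty}$ for a fixed $\Omega$, whereas the paper takes $\Omega=\{\tilde x\}$ so that $\eta(0)=\lambda\psi(\tilde x)$. Both work because the logarithms differ by a bounded amount, and your version avoids the paper's somewhat loose ``arbitrary choice of $\tilde x$'' step.
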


\begin{proof}
For $0<T_0<\displaystyle
\frac{1}{q-1}(\log\lambda\|\psi\|_{\infty})^{1-q}$, we denote
\begin{equation*}
\bar{u}(t)
=\exp\left\{[
 (\log(\lambda\|\psi\|_{\infty}))^{1-q}-(q-1)t
]^{ \frac{-1}{q-1} }\right\}.
\end{equation*}
Then $\bar{u}$ is an upper solution of the equation \eqref{equ} with $p=1$ and $q>1$ in $V\times[0,T_0]$. Therefore, we have
\begin{equation*}
T_{\lambda}\geq\displaystyle
\frac{1}{q-1}\left(\log(\lambda\|\psi\|_{\infty})\right)^{1-q},
\end{equation*}
that is
\begin{equation}\label{016}
\left(\displaystyle\log(\lambda\|\psi\|_{\infty})\right)^{q-1}T_{\lambda}\geq\displaystyle
\frac{1}{q-1}.
\end{equation}
We next establish
\begin{equation*}
(\log\lambda \|\psi\|_{\infty})^{q-1}T_{\lambda}\leq\frac{1}{q-1}.
\end{equation*}
By the lemma \ref{ela0}, if
\begin{equation*}
\lambda>\exp\{\lambda_1^{\frac{1}{q}}\}\left(\int_ {\Omega}\psi\phi d\mu\right)^{-1}.
\end{equation*}
then  $T_{\lambda}<+\infty$, where $\phi$
and $\lambda_1$ are defined therein.
In fact, in Theorem \ref{thm1}, we can define 
$$\Lambda_1=:\sup_{\text{finite } \Omega\subset V}\exp\{\lambda_1^{\frac{1}{q}}\}\left(\int_ {\Omega}\psi\phi d\mu\right)^{-1}.$$
Although it seems that the constant $\Lambda_1$ does not coincide with the expressions $\Lambda$ in Theorem \ref{thm1} at this point, we will explain the coincidence at the end of the proof.
Motivated by the proof of the lemma \ref{ela0}, we introduce the continuous function $\eta(t)$,
\begin{equation*}
\eta(t)=\displaystyle\int_{\Omega} u(\cdot,t)\phi(\cdot)d\mu,\quad t\in[0,T),
\end{equation*}
which satisfies
\begin{equation*}
\eta'(t)\geq\eta(t)\left((\log\eta(t))^q-\lambda_1 \right)>0,\hspace{10pt} \eta(0)=\int_ {\Omega}\lambda\psi\phi d\mu.
\end{equation*}
Proceeding as in the proof of Lemma \ref{ela0}, we obtain that
\begin{equation*}
\eta(t)\ \text{is monotonically increasing for }t>0,
\end{equation*}
and
\begin{equation*}
\eta'(t)\geq\eta(t)\cdot\frac{\epsilon}{1+\epsilon}
\cdot\left(\log\eta(t)\right)^q.
\end{equation*}
Hence, we have
\begin{equation}\label{017}
T_{\lambda}
\leq\frac{1+\epsilon}{(q-1)\epsilon}
\left(\log\eta(0)\right)^{1-q},
\end{equation}
where
\begin{equation*}\label{018}
\epsilon
=\displaystyle\frac{(\log\eta(0))^q}{\lambda_1}-1
=
\displaystyle\frac{(\log\int_V\lambda\psi\phi d\mu)^q}
{\lambda_1}-1\rightarrow+\infty,\quad \text{as }\lambda\rightarrow +\infty.
\end{equation*}
Now, \eqref{017} implies that 
\begin{equation*}
\lim\limits_{\lambda\rightarrow +\infty}
\left(\displaystyle\log\int_ {\Omega}\lambda\psi\phi d\mu\right)^{q-1}T_{\lambda}\leq\frac{1}{q-1}.
\end{equation*}
Fix an arbitrary $\tilde{x}\in V$ and let $\Omega=\{\tilde{x}\}$. Since  $\int_{\Omega}\phi\,d\mu=1$, we obtain
$\phi(\tilde{x})\mu(\tilde{x})=1$, and the inequality above implies 
\begin{equation*}
\lim\limits_{\lambda\rightarrow +\infty}
\left(\log(\lambda\psi(\tilde{x})\right)^{q-1}T_{\lambda}\leq\frac{1}{q-1}.
\end{equation*}
Moreover, the arbitrary choice of $\tilde{x}$ actually implies that 
\begin{equation}\label{019*}
\lim\limits_{\lambda\rightarrow +\infty}
\left(\log\left(\lambda\|\psi\|_{\infty}\right)\right)^{q-1}T_{\lambda}\leq\frac{1}{q-1}.
\end{equation}
Now, it directly follows from \eqref{016} and \eqref{019*} that
\begin{equation*}
\lim\limits_{\lambda\rightarrow +\infty}
(\log\lambda\|\psi\|_{\infty})^{q-1}T_{\lambda}=\frac{1}{q-1}.
\end{equation*}
\end{proof}
\noindent
At the end of this section, we explain the choice of the constant $\Lambda$ in Theorem \ref{thm1}. The condition $\Lambda\cdot\inf_{x\in V}\psi\geq e^{1-q}$ guarantees the usage of Jensen's inequality, since the function $x|\log x|^q$ is convex in $(e^{1-q},+\infty)$. Now we turn to the other condition $\Lambda\cdot\inf_{x\in V}\psi\geq C(q,V )$. In the proof of Lemma \ref{lim}, we actually need 
$$\lambda\geq \Lambda_1\quad \text{with }\Lambda_1=:\sup_{\text{finite } \Omega\subset V}\exp\{\lambda_1^{\frac{1}{q}}\}\left(\int_ {\Omega}\psi\phi d\mu\right)^{-1},$$
where $\Omega$ is a set of single point. Now, for any $x\in V$, the definition of $\Lambda_1$ actually implies 
$$\Lambda_1\int_ {\Omega}\psi\phi d\mu=\Lambda_1 \psi(x)\geq C(q,V),$$
which coincides with the definition of $\Lambda$ in Theorem \ref{thm1}. This completes the proof of Theorem \ref{thm1}.

\section{The proof of Theorem \ref{thm2}}
\noindent
For convenience, we assume $w_{xy}=w_{yx}=1$ and $\mu(x)=1$, then the constant $C(V)=1$ in Theorem \ref{thm2}.
Without loss of generality, we assume $i=1$ in the statement of Theorem \ref{thm2}. Now recalling
$u(x_1,0)=\lambda\psi(x_1)>\exp\left\{m^{1/q}\right\}$, a direct computation yields that
\begin{equation*}
\begin{array}{lll}
&\Delta u(x_1,0) + u(x_1,0)\big|\log
u(x_1,0)\big|^q\\[10pt]
=&\Delta u(x_1,0) + u(x_1,0)\big(\log
u(x_1,0)\big)^q\\[10pt]
=&\displaystyle\sum\limits_{y\sim x_1}(u(y,0)-u(x_1,0))
+u(x_1,0)\big(\log u(x_1,0)\big)^q\\[10pt]
\geq&-mu(x_1,0)+u(x_1,0)\big(\log u(x_1,0)\big)^q\\[10pt]
>&0,
\end{array}
\end{equation*}
which, by equation \eqref{equ} and the continuity of the solution $u$ in time, shows that $u_t(x_1,t)>0$ for sufficiently small $t>0$. Next, we show that $u_t(x_1,t)$ is monotonically increasing on $(0,T_{\lambda})$.
Let $\delta$ be the first time such that
\begin{equation*}
u_t(x_1,t)=0,
\end{equation*}
then
\begin{equation*}
u_t(x_1,\delta)>0,\quad t\in(0,\delta).
\end{equation*}
Therefore, $$u(x_1,\delta)>u(x_1,0)>\exp\left\{m^{1/q}\right\},\quad \log u(x_1,\delta)> \log u(x_1,0)>m^{1/q}.$$
A direct computation yields
\begin{equation}\label{**}
\begin{array}{lll}
u_t(x_1,\delta)
&=\Delta u(x_1,\delta) + u(x_1,\delta)\big(\log
u(x_1,\delta)\big)^q\\[10pt]
&=\displaystyle\sum\limits_{y\sim x_1}(u(y,\delta)-u(x_1,\delta))
+u(x_1,\delta)\big(\log u(x_1,\delta)\big)^q\\[10pt]
&\geq-mu(x_1,\delta)+u(x_1,\delta)\big(\log u(x_1,\delta)\big)^q\\[10pt]
&>0,
\end{array}
\end{equation}
which leads to a contradiction.
Therefore, we obtain that $u(x_1,t)$ is monotonically increasing in $(0,T_{\lambda})$.
For any $t\in (0,T_{\lambda})$, we have
\begin{equation}\label{020}
\big(\log u(x_1,t)\big)^q> \big(\log u(x_1,0)\big)^q
> m,
\end{equation}
then there exists a constant $\epsilon>0$ such that
\begin{equation}\label{021}
\big(\log u(x_1,0)\big)^q
=m(1+\epsilon)
\end{equation}
and
\begin{equation}\label{022}
\big(\log u(x_1,t)\big)^q\geq
m(1+\epsilon).
\end{equation}
Using \eqref{**} for $\delta=t$, there holds
\begin{equation*}
u_t(x_1,t)
\geq-mu(x_1,t)+u(x_1,t)\big(\log u(x_1,t)\big)^q.
\end{equation*}
Now, according to \eqref{022} and the inequality above,
we obtain
\begin{equation}\label{023}
u_t(x_1,t)\geq\frac{\epsilon}{1+\epsilon}
u(x_1,t)\big(\log u(x_1,t)\big)^q.
\end{equation}
It follows from inequality \eqref{023} that
\begin{equation}\label{024}
\begin{array}{lll}
t&\leq\displaystyle\frac{1+\epsilon}{(q-1)\epsilon}
\big(\log u(x_1,0)\big)^{1-q}\\[20pt]
&=
\displaystyle\frac{1+\epsilon}{(q-1)\epsilon}
\big(\log(\lambda\psi(x_1))\big)^{1-q}.
\end{array}
\end{equation}
Hence,
\begin{equation}\label{025}
T_{\lambda}
\leq
\displaystyle\frac{1+\epsilon}{(q-1)\epsilon}
\big(\log(\lambda\psi(x_1))\big)^{1-q}.
\end{equation}
By \eqref{021}, we have
\begin{equation}\label{026}
\epsilon=\displaystyle
\frac{\left(\log\lambda+\log\psi(x_1)\right)^q}{m}-1.
\end{equation}
Combining \eqref{025}-\eqref{026} yields
\begin{equation}\label{027}
\lim\limits_{\lambda\rightarrow+\infty}(\log(\lambda\psi(x_1)))^{q-1}T_{\lambda}
\leq\displaystyle\frac{1}{q-1}.
\end{equation}
On the other hand,
for $0<t<\displaystyle
\frac{1}{q-1}(\log(\lambda\|\psi\|_{\infty}))^{1-q}$, let
\begin{equation*}
\bar{u}(t)
=\exp\left\{
 (\log(\lambda\|\psi\|_{\infty}))^{1-q}-(q-1)t
\right\}^{ \frac{-1}{q-1} }.
\end{equation*}
It is easy to check that $\bar{u}(t)$ is an upper solution to the equation \eqref{equ} with $p=1$ and $q>1$.
 Then, it follows from lemma \ref{lemcomparision} that
\begin{equation*}
T_{\lambda}\geq\displaystyle
\frac{1}{q-1}(\log(\lambda\|\psi\|_{\infty}))^{1-q},
\end{equation*}
namely,
\begin{equation*}\label{029}
(\displaystyle\log(\lambda\|\psi\|_{\infty}))^{q-1}T_{\lambda}\geq\displaystyle
\frac{1}{q-1}.
\end{equation*}
The inequality above together with \eqref{027} yields the desired estimates in Theorem \ref{thm2}.


\section{The proof of Theorem \ref{thm4} and \ref{thm3}}

\subsection{The proof of Theorem \ref{thm4}}
\noindent
For convenience, we assume $w_{xy}=w_{yx}=1$ and $\mu(x)=1$, then the constant $C(V)=1$ in Theorem \ref{thm4}. Without loss of generality, we assume $i=1$ in Theorem \ref{thm4} and recall
\begin{equation}\label{031}
\left(\lambda\psi(x_1)\right)^{p-1}\left|\log \left(\lambda\psi(x_1)\right)\right|^q>m,\quad \lambda\psi(x_i)\geq 1.
\end{equation}
Then, a direct calculation implies
\begin{equation}\label{11**}
\begin{array}{lll}
&\Delta u(x_1,0) + u^p(x_1,0)\big|\log
u(x_1,0)\big|^q\\[10pt]
=&\Delta u(x_1,0) + u^p(x_1,0)\big(\log
u(x_1,0)\big)^q\\[10pt]
=&\displaystyle\sum\limits_{y\sim x_1}(u(y,0)-u(x_1,0))
+u^p(x_1,0)\big(\log u(x_1,0)\big)^q\\[10pt]
\geq&-mu(x_1,0)+u^p(x_1,0)\big(\log u(x_1,0)\big)^q\\[10pt]
>&0,
\end{array}
\end{equation}
which, by equation \eqref{equ} and the continuity of the solution $u$ in time, shows that $u_t(x_1,t)>0$ for sufficiently small $t>0$. Then similar to proof the inequality \eqref{**}, we obtain that $u(x_1,t)$ is monotonically increasing in $(0,T_{\lambda})$, which combined with \eqref{031},
implies
\begin{equation}\label{032}
u^{p-1}(x_1,t)\big(\log u(x_1,t)\big)^q
>
u^{p-1}(x_1,0)\big(\log u(x_1,0)\big)^q
=m(1+\epsilon),\quad t\in (0,T_{\lambda}).
\end{equation}
Now, it is easy to check that 
\begin{equation}\label{22**}
1+\epsilon=\frac{u^{p-1}(x_1,0)\big(\log u(x_1,0)\big)^q}{m}=\frac{(\lambda\psi)^{p-1}(x_1)\big(\log (\lambda\psi(x_1))\big)^q}{m}.
\end{equation}
Then,
\begin{equation}\label{033}
m<\frac{1}{1+\epsilon}u^{p-1}(x_1,t)\big(\log u(x_1,t)\big)^q.
\end{equation}
Next, similar as the proof of \eqref{11**}, we have
\begin{equation*}
u_t(x_1,t)\geq
-mu(x_1,t)+u^p(x_1,t)\big(\log u(x_1,t)\big)^q,
\end{equation*}
which combined with \eqref{033} gives
\begin{equation*}
u_t(x_1,t)\geq\frac{\epsilon}{1+\epsilon}
u^p(x_1,t)\big(\log u(x_1,t)\big)^q.
\end{equation*}
Integrating the above inequality over $(0,t)$ yields
\begin{equation}\label{034}
\displaystyle\int_{0}^{t}
\frac{u_t(x_1,t)}{u^p(x_1,t)\big(\log u(x_1,t)\big)^q}dt\geq\frac{\epsilon}{1+\epsilon}t.
\end{equation}
To investigate the property of the inequality above, we define
\begin{equation*}
F(r)=:\int_{r}^{+\infty}\frac{ds}{s^p(\log s)^q}\quad \text{for }r>0.
\end{equation*}
One can easily check that
\begin{equation}\label{035}
\frac{d}{dt}F(u(x_1,t))
=
\frac{-u_t(x_1,t)}{u^p(x_1,t)(\log u(x_1,t))^q }.
\end{equation}
It follows from \eqref{034} and \eqref{035} that
\begin{equation}\label{036}
\begin{array}{lll}
t&\leq
\displaystyle\frac{1+\epsilon}{\epsilon}
\left(-F(u(x_1,t))+F(u(x_1,0))\right)\\[12pt]
&\leq
\displaystyle\frac{1+\epsilon}{\epsilon}F(u(x_1,0))\\[12pt]
&=
\displaystyle\frac{1+\epsilon}{\epsilon}F(\lambda\psi(x_1)).
\end{array}
\end{equation}
By letting $s=\lambda\psi(x_1)y$, we now compute $F(\lambda\psi(x_1))$ as follows:
\begin{equation}\label{037}
\begin{array}{lll}
F(\lambda\psi(x_1))
&=
\displaystyle\int_{\lambda\psi(x_1)}^{+\infty}
\frac{ds}{s^p(\log s)^q}\\[15pt]
&=
\displaystyle\int_{1}^{+\infty}
\frac{dy}{y^p(\lambda\psi(x_1))^{p-1}
\{\log(\lambda\psi(x_1))+\log y \}^q }\\[15pt]
&\leq
\displaystyle\frac{1}{p-1}\cdot\frac{1}{ (\lambda\psi(x_1))^{p-1}
\{\log(\lambda\psi(x_1))\}^q},
\end{array}
\end{equation}
which, together with \eqref{036}, yields
\begin{equation*}
T_{\lambda}
\leq
\displaystyle\frac{1+\epsilon}{\epsilon}\cdot
\displaystyle\frac{1}{p-1}\cdot\frac{1}{ (\lambda\psi(x_1))^{p-1}
\{\log(\lambda\psi(x_1))\}^q  }.
\end{equation*}
The inequality above combined with \eqref{22**} implies that
\begin{equation}\label{038}
\lim\limits_{\lambda\rightarrow+\infty}
(\lambda\psi(x_1) )^{p-1}
(\log\lambda\psi(x_1))^{q}T_\lambda
\leq
\frac{1}{p-1}.
\end{equation}

\noindent
It remains to prove the other inequality in Theorem \ref{thm4}. Note that $F$ is a continuous decreasing function in $(0,+\infty)$, so its inverse exists, denoted by $F^{-1}$. We now define
\begin{equation}\label{123}
\overline{u}(t)=:F^{-1}(F(\lambda\|\psi\|_{\infty})-t)
\end{equation}
on the interval $0\leq t<F(\lambda\|\psi\|_{\infty})$. Clearly, $\overline{u}(0)=\lambda\|\psi\|_{\infty}$ and we note that $F(\overline{u}(t) )\rightarrow0$
as $t\rightarrow F(\lambda\|\psi\|_{\infty})$, then the definition
of $F$ implies
\begin{equation*}
\overline{u}(t)\rightarrow+\infty
\quad\text{as}\quad t\rightarrow F(\lambda\|\psi\|_{\infty}).
\end{equation*}
It follows from $F(\overline{u}(t))
=F(\lambda\|\psi\|_{\infty})-t $ and \eqref{035}
that $\bar{u}$ satisfies the following equation
\begin{equation*}
\partial_t\overline{u}(t)
=
\Delta \overline{u}(t)+\overline{u}^p(t)(\log\overline{u}(t))^q,
\end{equation*}
with $\overline{u}(0)=\lambda\|\psi\|_{\infty}$.
Hence, $\overline{u}(t)$ is an upper solution to equation \eqref{equ} in
$V\times [0,T)$. Then by the lemma \ref{lemcomparision},
\begin{equation*}
T_{\lambda}\geq F(\lambda\|\psi\|_{\infty}).
\end{equation*}
Using the same argument as in the proof of \eqref{036}, we obtain
\begin{equation*}
\begin{array}{lll}
F(\lambda\|\psi\|_{\infty})
&=
\displaystyle\int_{\lambda\|\psi\|_{\infty}}^{\infty}
\frac{ds}{s^p(\log s)^q}\\[15pt]
&=
\displaystyle\int_{1}^{+\infty}
\frac{dy}{y^p(\lambda\|\psi\|_{\infty})^{p-1}
\{\log(\lambda\|\psi\|_{\infty})+\log y \}^q }.
\end{array}
\end{equation*}
Then we get
\begin{equation*}
(\lambda\|\psi\|_{\infty} )^{p-1}
(\log\lambda\|\psi\|_{\infty})^{q}T_\lambda
\geq
\displaystyle\int_{1}^{+\infty}
\frac{dy}
{y^p
\left(
1+\frac{\log y}{\log(\lambda\|\psi\|_{\infty} )}
\right)^q },
\end{equation*}
which, by dominated convergence theorem, implies that
\begin{equation}\label{039}
\lim\limits_{\lambda\rightarrow+\infty}
(\lambda\|\psi\|_{\infty} )^{p-1}
(\log\lambda\|\psi\|_{\infty})^{q}T_\lambda
\geq
\frac{1}{p-1}.
\end{equation}
Combining \eqref{038} and \eqref{039}, we obtain the desired result in Theorem \ref{thm4}.

\subsection{The proof of Theorem \ref{thm3}}
In this subsection, we briefly explain the ideas to prove Theorem \ref{thm3}.
The proofs of the first part in Theorem \ref{thm3} (i.e., $T_\lambda<\infty$ whenever $\lambda>\Lambda$ and $\Lambda\cdot\inf_{x\in V}\psi\geq \max\{ C(p,q,V ), 1 \}$) and $$\lim\limits_{\lambda\rightarrow+\infty}
(\lambda\|\psi\|_{\infty} )^{p-1}
(\log(\lambda\|\psi\|_{\infty}))^{q}T_\lambda
\leq\frac{1}{p-1}$$ 
are totally similar to the proofs of Lemma \ref{ela0} and Lemma \ref{lim} after some minor modifications, so we omit the details.

It remains to show the opposite direction. Using the methods in the proofs of Theorems \ref{thm1}-\ref{thm4}, we need to construct an upper solution to equation \eqref{equ} in $V\times [0,T)$, which, actually, has been constructed in \eqref{123} and further implies 
$$\lim\limits_{\lambda\rightarrow+\infty}
(\lambda\|\psi\|_{\infty} )^{p-1}
(\log(\lambda\|\psi\|_{\infty}))^{q}T_\lambda
\geq\frac{1}{p-1}.$$
Consequently, the desired results in Theorem \ref{thm3} hold.


\section*{Acknowledgements}
Both authors gratefully acknowledge financial support from the China Scholarship Council (CSC).\\

\noindent\textbf{Conflicts of interest/Competing interests}: Not applicable.\\
\textbf{Datum availability statement}: Data sharing is not applicable to this article as no datasets were generated or analysed during the current study.\\

\normalem\bibliographystyle{plain}{}

\end{document}